\theoremstyle{plain}
\newtheorem{theorem}{Theorem}
\newtheorem{corollary}{Corollary}
\newtheorem{lemma}{Lemma}
\theoremstyle{definition}
\DeclareMathOperator{\tr}{tr}
\begin{document}
\title[Partial orthogonal spreads]
{Partial orthogonal spreads over $\mathbb{F}_2$ invariant\\ 
 under the symmetric and alternating groups}

\author[Rod Gow]{Rod Gow}
\thanks{School of Mathematics and Statistics,
University College,
Belfield, Dublin 4,
Ireland,
\emph{E-mail address:} \texttt{rod.gow@ucd.ie}}

\keywords{}
\subjclass{}
\begin{abstract}

Let $m\geq 3$ be an integer and let $V$ be a vector space of dimension $2^m$ over  $\mathbb{F}_2$. Let $Q$ be  a non-degenerate
quadratic form  of maximal Witt index $2^{m-1}$ defined on $V$. We show that the symmetric group $\Sigma_{2m+1}$ acts on $V$
as a group of isometries of $Q$ and leaves invariant a
partial orthogonal spread of size $2m+1$. This is enough to show that any group of even order $2m$ or odd order $2m+1$, $m\geq 3$,
acts transitively and regularly on a partial orthogonal spread in $V$. We construct the $\Sigma_{2m+1}$ linear action on $V$ by means
of the spin representation of the symmetric group. Furthermore, when $m\equiv 3\bmod 4$, we show that the partial spread
can be extended by two further maximal totally singular subspaces that are interchanged by $\Sigma_{2m+1}$.
We also show that the alternating group $\mathcal{A}_9$ acts
in a natural manner on a complete spread of size 9 defined on a vector space of dimension 8 over $\mathbb{F}_2$. 
\end{abstract}
\maketitle
\section{Introduction}
\noindent 

\noindent  Let $K$ be a finite field of characteristic 2, with $|K|=q$. Let $V$ be a vector space of even dimension $2r$ over $K$. A quadratic
form defined on $V$ is a function $Q:V\to K$ that satisfies
\begin{eqnarray*}
 Q(\lambda u)&=&\lambda^2Q(u)\\
Q(u+v)&=&Q(u)+Q(v)+ f(u,v)
\end{eqnarray*}
for all $\lambda$ in $K$ and all $u$ and $v$ in $V$. In this formula, $f$ is an alternating bilinear form, called the polarization of $Q$.  
We say that $Q$ is a non-degenerate quadratic form if $f$ is a non-degenerate bilinear form.

We say that a vector $v\in V$ is singular with respect to $Q$ if $Q(v)=0$. We say that a subspace $U$ of $V$ is totally isotropic with respect to $f$ if
$f(u,w)=0$ for all $u$ and $w$ in $U$. We also say that $U$ is totally singular with respect to $Q$ if all the elements of $U$ are singular.
It should be clear that if $U$ is totally singular with respect to $Q$, then it is totally isotropic with respect to $f$, but the converse is not
necessarily true.

We shall assume henceforth that $Q$ is non-degenerate. In this case, the maximum dimension of a totally singular subspace of $V$ is $r$. We say that
$Q$ has index $r$ if there is at least one $r$-dimensional totally singular subspace of $V$. We shall
also  assume henceforth that $Q$ has index $r$.

We recall that an isometry of $Q$ is a $K$-linear automorphism, $\sigma$, say, of $V$ that satisfies
\[
 Q(\sigma v)=Q(v)
\]
for all $v\in V$. The set of all isometries of $Q$ forms a group under composition, called the orthogonal group of $Q$. This group is determined
up to isomorphism by stipulating, as we have done, that $Q$ is non-degenerate and has index $r$. We denote this orthogonal group by
$O^+_{2r}(K)$, or $O^+_{2r}(q)$. 

We call a set of $r$-dimensional totally singular subspaces of $V$ a \emph{partial orthogonal spread} for $V$ and $Q$ if the subspaces intersect
trivially in pairs. Since $V$ contains precisely
\[
 (q^{r-1}+1)(q^r-1)
\]
non-zero singular vectors, it follows that a partial orthogonal spread contains at most $q^{r-1}+1$ subspaces. We refer to a partial
orthogonal spread of size $q^{r-1}+1$ as 
a complete orthogonal spread. 

It is a non-trivial fact
that $V$ contains a complete orthogonal spread if and only if $r$ is even. On the other hand, if $r$ is odd,
the maximum size of a partial orthogonal spread in $V$ is two. 

It is clear that if $\sigma$ is an element of $O^+_{2r}(K)$, and $U$ is a totally singular subspace of $V$, $\sigma(U)$ is also a totally
singular subspace. Thus, $O^+_{2r}(K)$ permutes the maximal ($r$-dimensional) totally singular subspaces, and it is known that this permutation
action is transitive. It is a remarkable fact, proved by Kantor and Williams, that provided $r$ is even, $V$ has a complete orthogonal spread
that consists of a regular orbit of a single maximal totally singular subspace under the action of a cyclic group of order $q^{r-1}+1$.
See Theorem 6.5 and the proof of Theorem 3.3 (i) in \cite{KW}. 

The main purpose of this paper is to construct a partial orthogonal spread of size $2m+1$  in $V$ invariant under the action of the symmetric group
$\Sigma_{2m+1}$, when $\dim V=2^m$, $m\geq 3$ is an integer, and $K=\mathbb{F}_2$. This enables us to prove the following result.
Let $G$ be a finite group with $|G|=2m+\epsilon$, where $\epsilon=0$ or 1. Then if $m\ge 3$, $G$
acts transitively and regularly on a partial orthogonal spread in $V$. We also show that the alternating group $\mathcal{A}_9$ acts
in a natural manner on a complete spread of size 9 defined on a vector space of dimension 8 over $\mathbb{F}_2$.

 \section{Modules for $\Sigma_n$ in characteristic 2}
 
\noindent The irreducible representations of the symmetric group $\Sigma_n$ over a field of characteristic 2 may all be realized
in  $\mathbb{F}_2$. They are labelled by partitions of $n$ where no two parts are equal (so-called 2-regular partitions). 
We let $D^\lambda$ denote the irreducible $\mathbb{F}_2\Sigma_n$-module labelled by the 2-regular partition $\lambda$.

There is one partition $\lambda$ of $n$ for which the corresponding module $D^\lambda$ has remarkable properties, which we wish to describe.
Suppose that $n\geq 3$ is odd, and set $n=2m+1$. Consider the partition $(m+1,m)$ of $n$. It is known that
\[
 \dim D^{(m+1,m)}=2^m. 
\]
Similarly, suppose that $n\geq 4$ is even and write $n=2m$. Then
\[
 \dim D^{(m+1,m-1)}=2^{m-1}
\]
in this case.

Given an $\mathbb{F}_2\Sigma_n$-module, $D$, say, let $D\downarrow_{\Sigma_{n-1}}$ denote its restriction to $\Sigma_{n-1}$.
From what we have described, $D\downarrow_{\Sigma_{n-1}}$ has a composition series consisting of irreducible modules of the form
$D^\mu$, where $\mu$ is a 2-regular partition of $n-1$. 

When $n=2m+1$ is odd, $D^{(m+1,m)}\downarrow_{\Sigma_{2m}}$ is indecomposable
and has a composition series consisting of two copies of $D^{(m+1,m-1)}$. (When $m=1$, $D^{(m+1,m-1)}=D^{(2)}$ is the trivial module
of $\Sigma_2$.) 
When $n=2m$ is even, 
\[
D^{(m+1,m-1)}\downarrow_{\Sigma_{2m-1}}=D^{(m,m-1)}
\]
is irreducible. These results are proved in the main theorem of \cite{Sheth}.

We shall call these irreducible modules $D^{(m+1,m)}$ and $D^{(m+1,m-1)}$ spin modules for $\Sigma_{2m+1}$, $\Sigma_{2m}$,
respectively, over $\mathbb{F}_2$. 

We say that an irreducible $\mathbb{F}_2\Sigma_n$-module $D$ is of quadratic type if there is a non-degenerate $\Sigma_n$- invariant quadratic form
$Q$, say, defined on $D$ (thus $\Sigma_n$ acts as isometries of $Q$). 

We do not know necessary and sufficient conditions on a 2-regular partition $\lambda$ that tell us if $D^\lambda$ is of quadratic type or not,
except in the case that $\lambda$ has only two parts. This  enables us to show that the spin modules are of quadratic type provided
$n\geq 7$. 

We quote the following lemma, proved in \cite{GQ}, Theorem 1.

\begin{lemma} \label{quadratic_type}
 
 Let $\lambda$ be a $2$-regular partition of $n$ into two parts. Then
$D^\lambda$ is not of quadratic type precisely when the smaller part
of $\lambda$ is a power of $2$, say $2^r$, where $r\ge 0$, and 
$n\equiv k\bmod 2^{r+2}$, where $k$ is one of the $2^r$ consecutive integers
$2^{r+1}+2^r-1$, \dots, $2^{r+2}-2$.
 
\end{lemma}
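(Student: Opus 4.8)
The plan is to reduce ``of quadratic type'' to the vanishing of a single cohomological obstruction, and then to evaluate that obstruction inside an explicit combinatorial model. Since $\mathbb{F}_2$ is a splitting field for $\Sigma_n$, the module $D:=D^\lambda$ is absolutely irreducible; it is also self-dual, so it carries a non-degenerate $\Sigma_n$-invariant bilinear form $f$, unique because $\mathrm{End}_{\Sigma_n}(D)=\mathbb{F}_2$. This $f$ is symmetric, and I would first observe that it is automatically alternating: in characteristic $2$ the function $v\mapsto f(v,v)$ is additive and $\Sigma_n$-invariant, hence lies in $\mathrm{Hom}_{\Sigma_n}(D,\mathbb{F}_2)$, which is $0$ because $D$ is a non-trivial irreducible module; thus $f(v,v)=0$ for all $v$. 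Consequently $D$ always supports an invariant symplectic form, and being of quadratic type is equivalent to $f$ being the polarization of some $\Sigma_n$-invariant quadratic form.

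Next I would phrase this lifting problem cohomologically. Over $\mathbb{F}_2$ the quadratic forms with polarization $f$ form a non-empty torsor under $\mathrm{Hom}(D,\mathbb{F}_2)\cong D^\ast\cong D$. Fixing one such form $Q_0$, the map $g\mapsto g\cdot Q_0-Q_0$ is a $1$-cocycle whose class $[c]\in H^1(\Sigma_n,D)$ is independent of the choice of $Q_0$ and vanishes exactly when an invariant quadratic form exists. Hence $D^\lambda$ fails to be of quadratic type precisely when $[c]\ne 0$; in particular a necessary condition for failure is $H^1(\Sigma_n,D^\lambda)\ne 0$.

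To evaluate $[c]$ for $\lambda=(n-b,b)$ I would work inside the permutation module $M$ on the $b$-element subsets of $\{1,\dots,n\}$, carrying its standard invariant form, in which $D^{(n-b,b)}$ arises as a section. The relevant submodule structure --- in particular whether the trivial module $D^{(n)}$ sits in a non-split extension alongside $D^{(n-b,b)}$, which is what makes $[c]$ potentially nonzero --- is controlled by the parities of the binomial coefficients appearing in the $\Sigma_n$-maps between the subset modules, via Lucas' and Kummer's theorems. I would then write down a candidate quadratic form on the relevant section and test its invariance under a single transposition; the surviving obstruction reduces to an explicit congruence among these binomial coefficients.

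The $2$-adic analysis should show that $[c]$ can be nonzero only when the smaller part $b$ is a power of $2$, say $b=2^r$ (equivalently, only then does the trivial module participate in the non-split self-dual extension forcing $H^1\ne 0$), and that for such $b$ the class $[c]$ is nonzero precisely when the binary digits of $n$ in positions $r$ and $r+1$ fall into the forbidden configuration, which is exactly $n\equiv k \pmod{2^{r+2}}$ for one of the $2^r$ residues $k=2^{r+1}+2^r-1,\dots,2^{r+2}-2$. As a reassuring check, the case $r=0$, $b=1$ should recover $D^{(n-1,1)}$ failing to be of quadratic type exactly when $n\equiv 2\pmod 4$. I expect the main obstacle to be this final step: distinguishing $[c]\ne 0$ from the weaker statement $H^1\ne 0$ on exactly the stated residues requires simultaneous control of the precise uniserial structure of the reduction of $S^{(n-b,b)}$ modulo $2$ and of the delicate binomial congruences, rather than any single clean identity.
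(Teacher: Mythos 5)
Your general framework is sound: the reduction of ``quadratic type'' to the vanishing of a class in $H^1(\Sigma_n,D^\lambda)$ is correct (the invariant bilinear form is unique and automatically alternating by exactly the argument you give, and the quadratic forms polarizing to $f$ form a torsor under $D^\ast\cong D^\lambda$), and computing inside the permutation module on $b$-element subsets, with the submodule structure controlled by binomial parities, is the natural setting. For calibration, note that the paper itself does not prove this lemma at all: it is quoted verbatim from \cite{GQ}, Theorem 1, so there is no internal argument to compare against, and your proposal has to stand on its own as a proof.

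As a proof it has a genuine gap, one you yourself flag in your final paragraph. Everything after ``I would then write down a candidate quadratic form'' is a statement of intent, not an argument: no candidate form on the relevant section of $M$ is exhibited, no invariance computation under a transposition is carried out, and no binomial congruence is derived, let alone matched to the residues $k=2^{r+1}+2^r-1,\dots,2^{r+2}-2 \bmod 2^{r+2}$. Both directions of the lemma are missing: you do not show that the obstruction class vanishes whenever the smaller part $b$ is not a power of $2$ (or whenever $n$ avoids the listed residues), nor that it is nonzero in exactly the listed cases. The distinction you correctly draw between $[c]\neq 0$ and $H^1(\Sigma_n,D^\lambda)\neq 0$ is precisely where all the difficulty lives --- $H^1$ can be nonzero while the particular class $[c]$ vanishes --- and resolving it requires the detailed structure theory (the uniserial structure of the relevant subquotients of $M$, equivalently precise knowledge of extensions between two-row irreducibles) that the proposal only gestures at via Lucas and Kummer. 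The sanity check at $r=0$ (namely $D^{(n-1,1)}$ failing to be of quadratic type exactly when $n\equiv 2\bmod 4$) is consistent with the statement, but a consistency check on a special case is not a substitute for the computation. In short: correct scaffolding, but the load-bearing combinatorial core of the lemma is absent.
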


We can now deal with the quadratic type of the spin module.

\begin{lemma} \label{quadratic_type_of_spin}
 
 The spin module of $\Sigma_n$ over $\mathbb{F}_2$ is of quadratic type except when $n=5$ or $6$.
\end{lemma}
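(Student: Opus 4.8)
The plan is to reduce the statement to a direct application of Lemma \ref{quadratic_type}, since in both the odd and even cases the spin module is of the form $D^\lambda$ for a partition $\lambda$ into exactly two parts. Concretely, when $n=2m+1$ is odd the spin module is $D^{(m+1,m)}$, whose smaller part is $m$, and when $n=2m$ is even it is $D^{(m+1,m-1)}$, whose smaller part is $m-1$. Thus the whole question becomes whether the failure condition of Lemma \ref{quadratic_type} can be met for these specific two-part partitions, and I would simply run through the resulting arithmetic.

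First, if the relevant smaller part ($m$ in the odd case, $m-1$ in the even case) is not a power of $2$, then Lemma \ref{quadratic_type} immediately gives that $D^\lambda$ is of quadratic type, and there is nothing further to check. It therefore remains to treat the case where the smaller part equals $2^r$ for some $r\ge 0$. In the odd case this forces $n=2m+1=2^{r+1}+1$, and in the even case it forces $n=2m=2^{r+1}+2$.

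Next I would compute the residue of $n$ modulo $2^{r+2}$ in each case and test membership in the ``bad'' block of $2^r$ consecutive integers $2^{r+1}+2^r-1,\dots,2^{r+2}-2$ identified in Lemma \ref{quadratic_type}. In the odd case $n=2^{r+1}+1<2^{r+2}$, so the residue is $2^{r+1}+1$; requiring $2^{r+1}+2^r-1\le 2^{r+1}+1\le 2^{r+2}-2$ produces a lower inequality forcing $r\le 1$ and an upper inequality forcing $r\ge 1$, hence $r=1$ and $n=5$. In the even case the residue is $2^{r+1}+2$ when $r\ge 1$ (and is $0$ when $r=0$, which fails the test), and the analogous pair of inequalities again pins down $r=1$, hence $n=6$. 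This isolates exactly the two claimed exceptions.

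The argument is essentially bookkeeping, so the only real care needed is with the small values of $r$: in particular one must check $r=0$ separately, where the residue computation degenerates (for instance $n=4$ in the even case gives residue $0$, and $n=3$ in the odd case gives residue $3$), and one must confirm that the two boundary inequalities break in opposite directions so that $r=1$ is genuinely forced. Carrying this out confirms that $n=5$ and $n=6$ are the only failures, while neighbouring candidates such as $n=3,4,9,10$ are of quadratic type.
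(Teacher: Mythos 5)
Your proof is correct: the arithmetic in both cases checks out, the boundary cases $r=0$ and $r=1$ are handled properly, and the two inequalities do pin down $r=1$, isolating exactly $n=5$ and $n=6$. However, your route differs from the paper's in the odd case. The paper applies Lemma \ref{quadratic_type} directly only for even $n=2m$ (exactly as you do, with essentially the same inequalities); for odd $n=2m+1$ it instead invokes the branching rule $D^{(m+2,m)}\downarrow_{\Sigma_{2m+1}}=D^{(m+1,m)}$: since the even spin module $D^{(m+2,m)}$ of $\Sigma_{2m+2}$ carries a non-degenerate invariant quadratic form once $m\geq 3$ (by the even case just proved), that same form is $\Sigma_{2m+1}$-invariant on the irreducible restriction, so $D^{(m+1,m)}$ is of quadratic type for $m\geq 3$; the paper then checks the one remaining case $D^{(3,2)}$ (that is, $n=5$) against Lemma \ref{quadratic_type} separately. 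Your alternative --- applying Lemma \ref{quadratic_type} directly to the partition $(m+1,m)$, whose smaller part is $m$ --- is perfectly legitimate, since the lemma applies to any two-part $2$-regular partition. What each approach buys: yours is more uniform, needs no input from the branching rule, and covers every odd $n$ in one sweep (including $n=3$, which the paper's restriction argument, stated only for $m\geq 3$, does not literally address); the paper's argument trades a second round of modular arithmetic for the slicker general principle that quadratic type is inherited by an irreducible restriction, which is in the same spirit as the deduction used elsewhere in the paper.
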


\begin{proof}
 
 Suppose that $n=2m$ is even and the spin module $D^{(m+1,m-1)}$ is not of quadratic type. Then Lemma \ref{quadratic_type} implies
 that $m-1=2^r$, for some integer $r\geq 0$, and hence $n=2^{r+1}+2$. We must also have
 \[
  n\equiv k\bmod 2^{r+2},
 \]
 where $k$ is one of $2^{r+1}+2^r-1$, \dots, $2^{r+2}-2$.
 
 It follows certainly that $2^{r+1}$ divides $k-2$, but $2^{r+2}$ does not divide $k-2$. Clearly, then, $k$ cannot equal 2, nor can it equal
 $2^{r+2}+2$, and the possibility that $k-2\geq 3\times 2^{r+1}$ is excluded by the inequality $k\leq 2^{r+2}-2$. Thus we must have
 $k=n=2+2^{r+1}$. 
 
When we take account of the inequality
\[
 2^{r+1}+2^r-1\leq k=2^{r+1}+2,
\]
we deduce that $2^r\leq 3$ and hence $r=0$ or 1. 

It is impossible that $r=0$, since it implies $k=4$, which is inconsistent with the inequality $k\leq 2^{r+2}-2=2$. Thus, $r=1$, $k=6$, 
and it is the case  
that $D^{(4,2)}$ is not of quadratic type. This deals with the case that $n$ is even.

Suppose next that $n=2m+1$ is odd. We have noted above that
\[
 D^{(m+2,m)}\downarrow_{\Sigma_{2m+1}}=D^{(m+1,m)}.
\]
Since $D^{(m+2,m)}$ is of quadratic type for $m\geq 3$ by what we have proved above, we deduce that $D^{(m+1,m)}$ is also of quadratic type for
$m\geq 3$. However, $D^{(3,2)}$ is not of quadratic type, by the criterion of Lemma \ref{quadratic_type}. This completes the analysis.

\end{proof}

\begin{lemma} \label{socle}
 
 Let $Q$ be a non-degenerate $\Sigma_{2m+1}$-invariant quadratic form defined on the spin module $D^{(m+1,m)}$. Let $U$ be the socle of
  $D^{(m+1,m)}\downarrow_{\Sigma_{2m}}$. Then, $U$ is irreducible, has dimension $2^{m-1}$ and is totally
 singular with respect to $Q$. 
 
\end{lemma}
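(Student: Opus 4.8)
The plan is to understand the structure of $D^{(m+1,m)}\downarrow_{\Sigma_{2m}}$ and then show its socle must be totally singular.

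Let me denote $D = D^{(m+1,m)}$, which has dimension $2^m$, and let $D' = D^{(m+1,m-1)}$, the spin module for $\Sigma_{2m}$ of dimension $2^{m-1}$. We're told that $D\downarrow_{\Sigma_{2m}}$ is indecomposable with a composition series consisting of two copies of $D'$. Since it's indecomposable but has two composition factors both isomorphic to $D'$, it must be a non-split extension (a uniserial module) with socle $\cong D'$ and top $\cong D'$. So the socle $U = \text{soc}(D\downarrow_{\Sigma_{2m}})$ is a single copy of $D'$, hence irreducible of dimension $2^{m-1}$. That establishes the first two claims immediately.

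Now for the key claim: $U$ is totally singular. Let me think about this.

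So $U$ is a $\Sigma_{2m}$-submodule of $D$ of dimension $2^{m-1}$, which equals the Witt index (half of $2^m$). So $U$ is a maximal totally isotropic subspace candidate. I need to show $U$ is totally singular with respect to $Q$.

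The plan is to use the invariance of $Q$ under $\Sigma_{2m}$ together with the irreducibility of $U$ as a $\Sigma_{2m}$-module.

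First I would show $U$ is totally isotropic with respect to the polarization $f$ of $Q$. The key observation is that the radical of $f$ restricted to $U$, namely $U^\perp \cap U$ where $U^\perp = \{v : f(v,u)=0 \text{ for all } u \in U\}$, is a $\Sigma_{2m}$-submodule of $U$ (since $f$ is $\Sigma_{2m}$-invariant, the perpendicular space is invariant, and the intersection of submodules is a submodule). Since $U$ is irreducible, either $U \cap U^\perp = 0$ or $U \cap U^\perp = U$. In the first case $f$ would be non-degenerate on $U$; but then $D = U \oplus U^\perp$ as $\Sigma_{2m}$-modules, contradicting that $D\downarrow_{\Sigma_{2m}}$ is indecomposable. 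Hence $U \cap U^\perp = U$, i.e. $U \subseteq U^\perp$, so $U$ is totally isotropic with respect to $f$.

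Now I must upgrade totally isotropic to totally singular, and this is the main obstacle. Over $\mathbb{F}_2$ the set of singular vectors in $U$ is a subspace because $Q$ restricted to a totally isotropic subspace is a \emph{linear} map: if $u, w \in U$ with $f(u,w)=0$, then $Q(u+w) = Q(u) + Q(w)$, so $Q|_U: U \to \mathbb{F}_2$ is $\mathbb{F}_2$-linear. The kernel of this linear form is the set of singular vectors in $U$, a $\Sigma_{2m}$-submodule (since $Q$ is $\Sigma_{2m}$-invariant, $Q|_U$ is a $\Sigma_{2m}$-fixed functional and its kernel is invariant). By irreducibility of $U$, this kernel is either $0$ or all of $U$. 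Since $\dim U = 2^{m-1} \geq 2 > 1$, the kernel cannot have codimension $1$ (that would make it a proper nonzero submodule), so $Q|_U$ must be identically zero, i.e. $U$ is totally singular. I expect the delicate point is confirming that $Q|_U$ is genuinely $\Sigma_{2m}$-invariant as a linear functional — but this is immediate since each $\sigma \in \Sigma_{2m}$ is an isometry and preserves $U$, so $Q(\sigma u) = Q(u)$ forces the functional to be fixed, hence its kernel is a submodule. The argument hinges entirely on the irreducibility of $U$ together with $\dim U > 1$.
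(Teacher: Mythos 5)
Your proof is correct and follows essentially the same route as the paper's: identify the socle from the indecomposability of $D^{(m+1,m)}\downarrow_{\Sigma_{2m}}$, use invariance of $U^\perp$ and irreducibility of $U$ to rule out $U\cap U^\perp=0$ (which would split the module), and then exploit the $\mathbb{F}_2$-linearity of $Q$ on a totally isotropic subspace so that irreducibility kills the codimension-one kernel. Your explicit remark that $\dim U>1$ is needed in the last step is a small point the paper leaves implicit, but the argument is the same.
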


\begin{proof}
 
 We have remarked that $D^{(m+1,m)}\downarrow_{\Sigma_{2m}}$ is indecomposable, having exactly two composition factors, both isomorphic
 to $D^{(m+1,m-1)}$. It follows that the socle $U$ of $D^{(m+1,m)}\downarrow_{\Sigma_{2m}}$ is irreducible, isomorphic to $D^{(m+1,m-1)}$, and
 hence has dimension $2^{m-1}$. 

 Let $f$ be the polarization of $Q$ and let $U^\perp$ be the perpendicular subspace of $U$ with respect to $f$. Since $U$ and $f$ are invariant
 under $\Sigma_{2m}$, $U^\perp$ is also invariant under the group. It follows that $U\cap U^\perp$ is a subspace of $U$ that is also
 $\Sigma_{2m}$-invariant. The irreducibility of $U$ implies that $U\cap U^\perp=0$ or $U$. 
 
 Now if $U\cap U^\perp=0$, the general equality that
 \[
  \dim U+\dim U^\perp=\dim D^{(m+1,m)}
 \]
 implies that $D^{(m+1,m)}\downarrow_{\Sigma_{2m}}$ is the direct sum of the two invariant submodules $U$ and $U^\perp$. This contradicts
 the fact that $U$ is the socle. 
 
 We deduce that $U\cap U^\perp=U$, and hence $U$ is totally isotropic with respect to $f$. This implies that
 \[
  Q(u+w)=Q(u)+Q(w)
 \]
for all elements $u$ and $w$ in $U$, and hence $Q$ is a linear mapping from $U$ to $\mathbb{F}_2$. The subset of all singular
vectors in $U$ is then a subspace of $U$ of codimension at most one in $U$, and this subspace is certainly $\Sigma_{2m}$-invariant. The irreducibility
of $U$ forces the conclusion that $Q$ is identically zero on $U$, and hence $U$ is totally singular.

\end{proof}

\section{Construction of an invariant partial orthogonal spread}

\noindent We have compiled enough information about the spin module to prove the following result relating to a partial orthogonal
spread invariant under symmetric group action.

\begin{theorem} \label{orthogonal_spread}

Suppose that $m\geq 3$. Let $V$ be a vector space of dimension $2^m$ over $\mathbb{F}_2$ that defines the spin module for
$\Sigma_{2m+1}$ and let $U$ denote the socle of $V\downarrow_{\Sigma_{2m}}$.
Let $g_1=1$, \dots, $g_{2m+1}$ be a set of coset representatives for $\Sigma_{2m}$ in $\Sigma_{2m+1}$. Then the $2m+1$ subspaces
$U=g_1U$, $g_2U$, \dots, $g_{2m+1}U$ form a partial orthogonal spread in $V$, permuted by $\Sigma_{2m+1}$ according to its natural action.
 
\end{theorem}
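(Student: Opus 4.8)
The plan is to check three things about the subspaces $U_i:=g_iU$ for $1\le i\le 2m+1$: that each is a maximal totally singular subspace, that $\Sigma_{2m+1}$ permutes them exactly as it permutes the cosets of $\Sigma_{2m}$, and that distinct members meet only in $0$. Throughout I would fix a non-degenerate $\Sigma_{2m+1}$-invariant quadratic form $Q$ on $V$, which exists because $n=2m+1\ge 7$ (Lemma~\ref{quadratic_type_of_spin}), and I would invoke Lemma~\ref{socle} to know that $U$ is totally singular of dimension $2^{m-1}$.

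The first two properties are immediate. Since every $g_i$ is an isometry of $Q$ and $U$ is totally singular, each $U_i=g_iU$ is totally singular of dimension $2^{m-1}$; as $\dim V=2^m=2\cdot 2^{m-1}$, the index is $r=2^{m-1}$ and so each $U_i$ is maximal totally singular. For the action, given $\sigma\in\Sigma_{2m+1}$ I would write $\sigma g_i=g_jh$ with $h\in\Sigma_{2m}$; as $U$ is a $\Sigma_{2m}$-submodule of $V\downarrow_{\Sigma_{2m}}$ we have $hU=U$, so $\sigma U_i=g_jhU=g_jU=U_j$. Hence $\Sigma_{2m+1}$ permutes the $U_i$ precisely as it permutes the cosets $g_i\Sigma_{2m}$, i.e.\ by the natural action on $2m+1$ points.

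Next I would record that the $U_i$ are pairwise distinct. Let $H$ be the stabilizer of $U$ in $\Sigma_{2m+1}$; then $\Sigma_{2m}\le H$. Because $\Sigma_{2m}$ is a point stabilizer it is a maximal subgroup of $\Sigma_{2m+1}$, so $H=\Sigma_{2m}$ or $H=\Sigma_{2m+1}$. The second option would make $U$ a proper nonzero $\Sigma_{2m+1}$-submodule of the irreducible module $V=D^{(m+1,m)}$, which is impossible; thus $H=\Sigma_{2m}$, and $g_iU=g_jU$ forces $g_i\Sigma_{2m}=g_j\Sigma_{2m}$, i.e.\ $i=j$.

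The heart of the argument, and the step I expect to be the main obstacle, is the trivial intersection, for which the branching rule of the spin module is essential. Fix $i\ne j$ and let $S_i=g_i\Sigma_{2m}g_i^{-1}$ be the stabilizer of the point attached to the coset $g_i\Sigma_{2m}$; then $U_i$ is $S_i$-invariant, and likewise $U_j$ is $S_j$-invariant. The subgroup $S_{ij}=S_i\cap S_j\cong\Sigma_{2m-1}$ fixes the two corresponding points, preserves both $U_i$ and $U_j$, and hence preserves $U_i\cap U_j$. Now $U_i\cong D^{(m+1,m-1)}$ as an $S_i$-module, so restricting to $S_{ij}$ gives $U_i\downarrow_{S_{ij}}\cong D^{(m+1,m-1)}\downarrow_{\Sigma_{2m-1}}=D^{(m,m-1)}$, which is irreducible by the theorem of \cite{Sheth} quoted earlier. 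Therefore $U_i\cap U_j$, being an $S_{ij}$-submodule of the irreducible $U_i$, is either $0$ or all of $U_i$; the latter would give $U_i\subseteq U_j$ and, by equality of dimensions, $U_i=U_j$, contradicting distinctness. Hence $U_i\cap U_j=0$ for all $i\ne j$, and the $U_i$ form a partial orthogonal spread as claimed. The subtlety is precisely that distinctness alone does not rule out a nonzero intersection; it is the irreducibility of the once-restricted spin module that collapses any would-be intermediate intersection to $0$.
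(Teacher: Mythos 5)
Your proof is correct and follows essentially the same route as the paper: total singularity via Lemmas \ref{quadratic_type_of_spin} and \ref{socle}, distinctness via maximality of $\Sigma_{2m}$ together with irreducibility of $V$, and trivial intersection via invariance of $U_i\cap U_j$ under a two-point stabilizer $\Sigma_{2m-1}$ combined with the irreducibility of $D^{(m+1,m-1)}\downarrow_{\Sigma_{2m-1}}=D^{(m,m-1)}$. The only differences are cosmetic: you spell out the coset-permutation action and maximal total singularity explicitly, and you argue for an arbitrary pair $U_i,U_j$ rather than reducing to $U\cap gU$ as the paper does.
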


\begin{proof}
 
 We first note that, since $m\geq 3$, $V$ is indeed a module of quadratic type, by Lemma \ref{quadratic_type_of_spin}, and Lemma \ref{socle}
 implies that $U$ is totally singular with respect to the invariant non-degenerate quadratic form.
 Let $g$ be any element of $\Sigma_{2m+1}$ not in $\Sigma_{2m}$.  Consider the subspace $gU$, which is also totally singular. 
 
 We claim that $gU\neq U$. For if $gU=U$, $U$ is invariant under the subgroup of $\Sigma_{2m+1}$ generated by $g$ and  $\Sigma_{2m}$. Since
 $\Sigma_{2m}$ is a maximal subgroup of $\Sigma_{2m+1}$, this subgroup is all of $\Sigma_{2m+1}$. But as $V$ is irreducible for $\Sigma_{2m+1}$,
 $U$ cannot be invariant under $\Sigma_{2m+1}$. We deduce that $gU\neq U$, as claimed.
 
 Consider now the subspace $U\cap gU$, which we have just shown is not the whole of $U$. It is easy to see that $U\cap gU$ is invariant under
 the subgroup $\Sigma_{2m}\cap (g\Sigma_{2m}g^{-1})$ of $\Sigma_{2m}$. If we assume, as we may, that $\Sigma_{2m}$ is the subgroup of
 $\Sigma_{2m+1}$ fixing 1 in the natural representation of $\Sigma_{2m+1}$  on the numbers $\{1, 2, \ldots, 2m+1\}$, we see that
 $\Sigma_{2m}\cap (g\Sigma_{2m}g^{-1})$ is the subgroup fixing 1 and $g(1)$. We may then identify this subgroup unambiguously as $\Sigma_{2m-1}$,
 since the subgroups of $\Sigma_{2m+1}$ fixing 1 and a different number are conjugate.
 
 We know that $U$ affords the spin representation of $\Sigma_{2m}$, and its restriction to $\Sigma_{2m-1}$ is irreducible. Thus the only subspaces of
 $U$ that are invariant under $\Sigma_{2m-1}$ are $U$ and 0. We deduce that $U\cap gU=0$, as required, and the rest of the theorem
 follows  from this argument.
\end{proof}

\begin{corollary} \label{odd_order}
 Let $G$ be a finite group of order at least $6$. Then $G$ acts in a regular transitive manner on a partial orthogonal spread of size
 $|G|$ defined on a quadratic space of dimension $2^m$ over $\mathbb{F}_2$, where $m$ is the integer part of $|G|/2$. 
 
\end{corollary}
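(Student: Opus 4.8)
The plan is to combine Cayley's theorem with Theorem~\ref{orthogonal_spread}. Set $m=\lfloor |G|/2\rfloor$. Since $|G|\geq 6$, we have $m\geq 3$ in every case (the extreme value $m=3$ occurs exactly when $|G|=6$ or $|G|=7$), so Theorem~\ref{orthogonal_spread} applies: it furnishes a partial orthogonal spread $\{U_1=U, U_2, \dots, U_{2m+1}\}$ of size $2m+1$ in the spin module $V$ of dimension $2^m$, permuted by $\Sigma_{2m+1}$ in exactly the same way that $\Sigma_{2m+1}$ permutes the letters $\{1,\dots,2m+1\}$. The remaining work is to locate $G$ suitably inside $\Sigma_{2m+1}$, and for this I would first embed $G$ via its regular (left-multiplication) action, realizing $G$ as a regular subgroup of $\Sigma_{|G|}$ acting on $|G|$ letters.

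If $|G|=2m+1$ is odd, then $G\leq \Sigma_{2m+1}$ already acts regularly on the $2m+1$ letters. Under the natural identification of letters with spread members supplied by Theorem~\ref{orthogonal_spread}, $G$ therefore acts regularly and transitively on the entire spread $\{U_1,\dots,U_{2m+1}\}$, which is a partial orthogonal spread of size $|G|$ in a space of dimension $2^m$. This settles the odd case.

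If $|G|=2m$ is even, I would instead view $G$ inside $\Sigma_{2m+1}$ through the chain $G\leq \Sigma_{2m}\leq \Sigma_{2m+1}$, taking $\Sigma_{2m}$ to be the stabilizer of one letter, say $2m+1$. Then $G$ fixes the associated subspace $U_{2m+1}$ and acts regularly on the remaining $2m$ letters, hence regularly and transitively on the corresponding $2m$ subspaces $\{U_1,\dots,U_{2m}\}$. Since any subcollection of a partial orthogonal spread is again a partial orthogonal spread (the trivial-intersection condition is inherited), these $2m=|G|$ totally singular subspaces form the required partial orthogonal spread in the same $2^m$-dimensional quadratic space.

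No step presents a genuine obstacle once Theorem~\ref{orthogonal_spread} is in hand; the argument is essentially a transport of the regular permutation action across the $\Sigma_{2m+1}$-equivariant bijection between letters and spread members. The only point deserving a moment's care is the even case, where one must pass to a sub-collection of the given spread and check that the regularity of the $G$-action on the non-fixed letters transfers verbatim to the action on the associated $2m$ subspaces, together with the initial verification that $m\geq 3$ so that Theorem~\ref{orthogonal_spread} is genuinely available.
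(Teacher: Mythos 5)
Your proof is correct and follows essentially the same route as the paper: embed $G$ in $\Sigma_{2m+1}$ via the regular representation when $|G|=2m+1$ is odd, and as a regular subgroup of the point stabilizer $\Sigma_{2m}$ when $|G|=2m$ is even, then transport the action across the equivariant bijection of Theorem~\ref{orthogonal_spread}. Your explicit remarks that $m\geq 3$ holds whenever $|G|\geq 6$ and that a subcollection of a partial spread is again a partial spread are details the paper leaves implicit, but the argument is the same.
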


\begin{proof}

Suppose first that $|G|=2m+1$, where $m\ge 3$.  
 We may embed $G$ into $\Sigma_{2m+1}$ by means of its regular representation. Then $G$ permutes the subspaces in 
 the partial orthogonal spread described in Theorem \ref{orthogonal_spread} in a regular transitive manner, as required.
 
 Suppose next that $|G|=2m$ is even, with $m\geq 3$. We may embed $G$ into $\Sigma_{2m+1}$ in such a way that $G$ fixes one point
 and permutes the remaining $2m$ points regularly. Then, in the action on the partial orthogonal spread described in Theorem \ref{orthogonal_spread},
 $G$ clearly fixes one subspace and permutes the other $2m$ subspaces regularly. 
 \end{proof}
 
 \noindent\textbf{Note.} If $|G|=3$, 4 or 5, we may embed $G$ into $\Sigma_7$ and then show that $G$ acts transitively on a partial
 orthogonal spread of size $|G|$ defined on a quadratic space of dimension $8$ over $\mathbb{F}_2$. However, a group of order 4 or 5 does not act
 transitively on a partial orthogonal spread of size 4 or 5 on a space of dimension 4 over $\mathbb{F}_2$, since a complete spread only contains
 three subspaces in such a case. A similar remark holds for a group of order 3 acting on a two-dimensional space over $\mathbb{F}_2$.

 \section{Extension of the partial spread when $m\equiv 3 \bmod 4$}
 
 \noindent When we take into account the influence of the alternating subgroup $\mathcal{A}_{2m+1}$ of $\Sigma_{2m+1}$, we shall show
 that if $m\equiv 3\bmod 4$, the partial orthogonal spread of size $2m+1$ just described can be extended by two more maximal totally singular subspaces
 to give a partial orthogonal spread of size $2m+3$. The two additional subspaces are invariant under the alternating group $\mathcal{A}_{2m+1}$ 
 and are interchanged by any odd permutation in $\Sigma_{2m+1}$.
 
 In order to find these additional subspaces, it is necessary to describe how $\mathcal{A}_{2m+1}$ acts on the spin module. The results
 we need to know are quite sensitive to properties of the integer $m$, and require careful
 explanation. 
 
 \begin{lemma} \label{splitting}
  
  The spin module $D^{(m+1,m)}$ of $\Sigma_{2m+1}$ over $\mathbb{F}_2$ splits as a direct sum of two non-isomorphic irreducible
  $\mathbb{F}_2\mathcal{A}_{2m+1}$-modules if $m\equiv 0\bmod 4$ or if $m\equiv 3\bmod 4$. The two $\mathcal{A}_{2m+1}$-modules
  are conjugate under the action of $\Sigma_{2m+1}$, as described by Clifford's theorem.
 \end{lemma}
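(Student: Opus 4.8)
The plan is to read the statement through Clifford theory for the index-two normal subgroup $\mathcal{A}_{2m+1}$ of $\Sigma_{2m+1}$, carried out over $\mathbb{F}_2$. Write $\Sigma=\Sigma_{2m+1}$, $\mathcal{A}=\mathcal{A}_{2m+1}$ and $D=D^{(m+1,m)}$. Since $D^\lambda$ is absolutely irreducible over $\mathbb{F}_2$, Clifford's theorem forces the restriction of $D$ to $\mathcal{A}$ to be semisimple, and, the index being $2$, its $\mathcal{A}$-module structure is governed entirely by the $\mathbb{F}_2$-algebra $\mathrm{End}_{\mathbb{F}_2\mathcal{A}}(D)$: this is $\mathbb{F}_2$ exactly when the restriction stays absolutely irreducible, $\mathbb{F}_4$ when it is irreducible over $\mathbb{F}_2$ but splits only over $\mathbb{F}_4$, $\mathbb{F}_2\times\mathbb{F}_2$ when it splits over $\mathbb{F}_2$ into two non-isomorphic conjugate pieces, and $M_2(\mathbb{F}_2)$ in the ramified case $E\oplus E$. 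The lemma asserts precisely that the third alternative, $\mathrm{End}_{\mathbb{F}_2\mathcal{A}}(D)\cong\mathbb{F}_2\times\mathbb{F}_2$, occurs exactly when $m\equiv 0$ or $3\bmod 4$. So my first move is to reduce everything to the structure of this endomorphism algebra, noting that what must be exhibited is a nontrivial idempotent, equivalently a proper nonzero $\mathbb{F}_2\mathcal{A}$-submodule of $D$, and not merely a failure of absolute irreducibility.

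Since $D$ is self-dual — it carries the non-degenerate invariant polarization $f$ supplied for $m\ge 3$ by Lemma \ref{quadratic_type_of_spin} — I would identify $\mathrm{End}_{\mathbb{F}_2\mathcal{A}}(D)$ with the space of $\mathcal{A}$-invariant bilinear forms on $D$, inside which the unique $\Sigma$-invariant form $f$ always lies. The question then becomes whether passing from $\Sigma$ to $\mathcal{A}$ produces genuinely new invariant forms and, if so, whether one of them yields an idempotent splitting. Because the paper ultimately wants explicit $\mathcal{A}$-invariant totally singular subspaces, I would aim to produce the two summands concretely: exhibit a proper nonzero $\mathbb{F}_2\mathcal{A}$-submodule $E_1$ of $D$, and then set $E_2=tE_1$ for any transposition $t$, so that $t$ interchanges $E_1$ and $E_2$. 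Once a genuine $\mathbb{F}_2$-submodule is in hand, Clifford's theorem supplies the rest of the statement: $E_1$ and $E_2$ are conjugate, non-isomorphic and irreducible, $D=E_1\oplus E_2$, and the interchange by any odd permutation is automatic.

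To locate $E_1$ I would pass to an explicit realisation of the spin module — the Clifford-algebra model underlying $D^{(m+1,m)}$, in which a transposition acts as an explicit involution and the even elements of $\mathcal{A}$ act as even Clifford words. In this model one writes down a candidate non-scalar $\mathcal{A}$-endomorphism and tests when it is idempotent over $\mathbb{F}_2$ rather than satisfying an $\mathbb{F}_4$-quadratic relation; the surviving condition is a congruence on $n=2m+1$ modulo $8$, reflecting the eight-fold periodicity in the structure of the relevant Clifford algebras. The split values $n\equiv\pm 1\bmod 8$ are exactly $m\equiv 0$ and $m\equiv 3\bmod 4$, whereas $n\equiv 3,5\bmod 8$, that is $m\equiv 1,2\bmod 4$, force $\mathrm{End}_{\mathbb{F}_2\mathcal{A}}(D)=\mathbb{F}_2$ and absolute irreducibility. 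The same explicit decomposition, producing two pieces of equal dimension $2^{m-1}$ rather than a repeated summand, simultaneously excludes the ramified alternative $M_2(\mathbb{F}_2)$.

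I expect the genuine obstacle to be this middle computation: setting up a sufficiently concrete model of the spin representation and pinning the endomorphism algebra to a clean $n\bmod 8$ dichotomy, while carefully separating true $\mathbb{F}_2$-splitting from the spurious $\mathbb{F}_4$-irreducible case and from ramification. Everything before it is soft Clifford theory and everything after it is formal; it is the arithmetic of the spin module, not general representation theory, that produces the specific congruences. This step is in substance Benson's computation of the basic spin modules over $\mathbb{F}_2$, which I would either reproduce in the present notation or invoke directly.
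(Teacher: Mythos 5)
Your proposal is correct in substance and, at its core, takes the same route as the paper: the paper's entire proof of Lemma~\ref{splitting} is the one-line citation of Theorem 6.1 of \cite{Ben}, and your argument, after a sound Clifford-theoretic reduction to the structure of $\mathrm{End}_{\mathbb{F}_2\mathcal{A}_{2m+1}}(D)$, explicitly defers the decisive mod-$8$ computation to exactly that result of Benson. One side remark of yours is wrong, though it does not affect the (one-directional) lemma: in the non-split cases $m\equiv 1,2\bmod 4$ (i.e.\ $n\equiv 3,5\bmod 8$) the endomorphism algebra is $\mathbb{F}_4$, not $\mathbb{F}_2$ --- the restriction is never absolutely irreducible, because Schur's complex spin character already splits on restriction to the double cover of $\mathcal{A}_{2m+1}$ for every odd $n$, so the two absolutely irreducible constituents are always present and are merely Galois-conjugate over $\mathbb{F}_4$ rather than $\mathbb{F}_2$-rational (for instance, $D^{(3,2)}\downarrow_{\mathcal{A}_5}$ is the natural two-dimensional $\mathbb{F}_4\mathrm{SL}_2(4)$-module viewed over $\mathbb{F}_2$).
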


 \begin{proof}
  
  This follows from Theorem 6.1 of \cite{Ben}.
 \end{proof}

 The next question we need to address is whether or not the two irreducible $\mathbb{F}_2\mathcal{A}_{2m+1}$-modules described in
 Lemma \ref{splitting} are self-dual. Here again, the answer is not obvious, and depends on the residue of $m$ modulo 4. 
 It seems that to establish what we want to know, we must invoke an alternative construction of the spin module over $\mathbb{F}_2$.

Let $\Gamma_{2m+1}$ denote either of the two non-isomorphic double covers of $\Sigma_{2m+1}$. The commutator subgroup of $\Gamma_{2m+1}$
has index 2 in $\Gamma_{2m+1}$ and is a double cover of $\mathcal{A}_{2m+1}$, which we shall denote by $\tilde{\mathcal{A}}_{2m+1}$. 
Since $\tilde{\mathcal{A}}_{2m+1}$ is an extension of a central subgroup of order 2 by $\mathcal{A}_{2m+1}$, given any element of odd order in 
$\mathcal{A}_{2m+1}$, there is a unique element of the same order that projects onto it under the canonical homomorphism from 
$\tilde{\mathcal{A}}_{2m+1}$ 
onto $\mathcal{A}_{2m+1}$. We shall refer to this element of $\tilde{\mathcal{A}}_{2m+1}$ as the canonical inverse image of the given element
of odd order in $\mathcal{A}_{2m+1}$. 

$\Gamma_{2m+1}$ has a faithful irreducible complex representation of degree $2^m$, known as the basic spin representation (it is an example
of a so-called projective representation of $\Sigma_{2m+1}$). Let $\theta$ denote the character
of the basic spin representation. Schur shows in \cite{Schur}, Formula VII*, p. 205, that $\theta$ is rational-valued. Furthermore, it follows 
Theorem 7.7 of \cite{W} that $\theta$ defines an absolutely irreducible Brauer character modulo the prime 2. Corollary 9.4 of Chapter IV of
\cite{F} implies then that $\theta$ has Schur index one over the field $\mathbb{Q}_2$ of 2-adic numbers. Thus, since
$\theta$ certainly takes values in $\mathbb{Q}_2$, we deduce that the basic spin representation may be realized over
$\mathbb{Q}_2$.

Let $\mathbb{Z}_2$ denote the ring of 2-adic integers in $\mathbb{Q}_2$. $R$ is a principal ideal domain and it
follows that there is a $\Gamma_{2m+1}$-invariant $\mathbb{Z}_2$-lattice $L$, say, of rank $2^m$ which affords the basic spin representation.
The quotient $L/2L$ is then a vector space, $\overline{L}$, say, of dimension $2^m$ over $\mathbb{F}_2$. Since the central involution
of $\Gamma_{2m+1}$ acts as $-I$ on $L$, this involution acts trivially on $\overline{L}$, and thus $\overline{L}$ is naturally an
$\mathbb{F}_2\Sigma_{2m+1}$-module, which it turns out is isomorphic to the spin module $D^{(m+1,m)}$ we have been considering. 

Working over the algebraic closure of $\mathbb{Q}_2$, the basic spin module is reducible on restriction to $\tilde{\mathcal{A}}_{2m+1}$. This splitting
does not necessarily occur over $\mathbb{Q}_2$, since we need a square root of $(-1)^m(2m+1)$ for it to take place.
We refer to \cite{Schur}, Formula VII*, p. 205, for this theory. Schur shows that $\theta$ splits into two different irreducible characters
of $\tilde{\mathcal{A}}_{2m+1}$,
$\theta_1$ and $\theta_2$, say. These characters $\theta_1$ and $\theta_2$ are real-valued if and only if
$m$ is even. Furthermore, $\theta_1$ and $\theta_2$ differ on the canonical inverse image of 
a $2m+1$-cycle. In particular,
if $m$ is odd, $\theta_1$ and $\theta_2$ take non-real values on the canonical inverse image of a $2m+1$-cycle.

Now $\theta$ restricted to elements of odd order is the Brauer character of the spin module $D^{(m+1,m)}$ of $\Sigma_{2m+1}$. Since
we know that $D^{(m+1,m)}$ is reducible on restriction to $\mathcal{A}_{2m+1}$, the Brauer characters of the irreducible constituents are
$\theta_1$ and $\theta_2$, again restricted to elements of odd order. Finally, since $\theta_1$ and $\theta_2$ are not real-valued on the canonical 
inverse
image of a $2m+1$-cycle, the Brauer characters defined by $\theta_1$ and $\theta_2$ are not 
real-valued, and consequently, the two irreducible constituents of $D^{(m+1,m)}\downarrow_{\mathcal{A}_{2m+1}}$ are not self-dual.

When we collect the information we have derived from the work of Schur, we have proved the following important result.

\begin{lemma} \label{not_self_dual}
 The spin module $D^{m+1,m}$ of $\Sigma_{2m+1}$ over $\mathbb{F}_2$ splits as a direct sum of two non-isomorphic irreducible
  $\mathbb{F}_2\mathcal{A}_{2m+1}$-modules if $m\equiv 3\bmod 4$. The two 
  $\mathbb{F}_2\mathcal{A}_{2m+1}$-modules are not self-dual
  in this case.
 
\end{lemma}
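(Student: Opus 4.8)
The plan is to separate the two assertions of the lemma. The direct-sum decomposition is already available: since $m\equiv 3\bmod 4$ is among the residues treated in Lemma \ref{splitting}, that lemma immediately yields that $D^{(m+1,m)}\downarrow_{\mathcal{A}_{2m+1}}$ is a direct sum of two non-isomorphic irreducible $\mathbb{F}_2\mathcal{A}_{2m+1}$-modules, say $W_1$ and $W_2$, interchanged by any odd permutation of $\Sigma_{2m+1}$. Hence the only genuine work is to show that neither $W_i$ is self-dual.

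For this I would invoke the standard criterion that a module over a field of characteristic $p$ is self-dual precisely when its Brauer character is real-valued, that is, fixed by complex conjugation on the values taken at $p$-regular (here odd-order) elements. So it suffices to exhibit an odd-order element of $\mathcal{A}_{2m+1}$ on which the Brauer characters of $W_1$ and $W_2$ take non-real values.

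The bridge to Schur's computation is the $2$-adic construction recalled in the discussion above. Realising the basic spin representation of $\tilde{\mathcal{A}}_{2m+1}$ over $\mathbb{Q}_2$, choosing an invariant $\mathbb{Z}_2$-lattice, and reducing modulo $2$ produces $D^{(m+1,m)}\downarrow_{\mathcal{A}_{2m+1}}$, with the two complex constituents $\theta_1,\theta_2$ reducing to the Brauer characters of $W_1$ and $W_2$. Because the central involution acts as $-I$, and so trivially after reduction, each $\theta_i$ descends to a function on the odd-order classes of $\mathcal{A}_{2m+1}$; and for an element of odd order the Brauer character value coincides with the ordinary value of $\theta_i$ at the canonical inverse image. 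I would then cite Schur's formula (\cite{Schur}, Formula VII*, p.~205), to the effect that $\theta_1,\theta_2$ are real-valued exactly when $m$ is even and that they take non-real values on the canonical inverse image of a $(2m+1)$-cycle. As $m\equiv 3\bmod 4$ is odd and a $(2m+1)$-cycle has odd order $2m+1$, that inverse image is an odd-order element, so its value is recorded by the Brauer character. Thus both Brauer characters are non-real, whence $W_1$ and $W_2$ are not self-dual; indeed $W_1^{*}\cong W_2$, which is why these two conjugate modules are genuinely distinct from their duals.

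The step I expect to be most delicate is the passage from the ordinary characters $\theta_1,\theta_2$ of the double cover to the Brauer characters of the mod-$2$ constituents. One must verify that the lattice reduction is compatible with Clifford's theorem, so that the two complex constituents really do reduce to the two distinct modular constituents $W_1,W_2$ rather than mixing, and that the chosen non-real value survives in the Brauer character — which is exactly what is guaranteed by the element being $2$-regular, since $2m+1$ is odd. The remaining ingredient, Schur's reality criterion expressed through the parity of $m$, I would simply quote from \cite{Schur}.
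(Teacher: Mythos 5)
Your proposal is correct and follows essentially the same route as the paper: the splitting is quoted from Lemma \ref{splitting}, and non-self-duality is deduced by realising the basic spin representation over $\mathbb{Q}_2$, reducing an invariant $\mathbb{Z}_2$-lattice modulo $2$, identifying the Brauer characters of the two constituents with Schur's characters $\theta_1,\theta_2$ restricted to odd-order elements, and citing Schur's result that these take non-real values on the canonical inverse image of a $(2m+1)$-cycle when $m$ is odd. The ``delicate step'' you flag is resolved in the paper exactly as you suggest: since the restriction is already known to have two non-isomorphic irreducible summands, their Brauer characters must be $\theta_1$ and $\theta_2$ on odd-order classes, with no mixing possible.
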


We proceed to extend the partial orthogonal spread when $m\equiv 3\bmod 4$.

\begin{theorem} \label{extended_spread}
 Suppose that $m\equiv 3\bmod 4$. Let $V$ be a vector space of dimension $2^m$ over $\mathbb{F}_2$ that defines the spin module for
$\Sigma_{2m+1}$ and let $U$ denote the socle of $V\downarrow_{\Sigma_{2m}}$. Let $V\downarrow_{\mathcal{A}_{2m+1}}=U_1\oplus U_2$, where $U_1$ and $U_2$
are non-isomorphic irreducible 
$\mathbb{F}_2\mathcal{A}_{2m+1}$-modules. Then $U_1$ and $U_2$ are both totally singular. 

Furthermore, let $g_1=1$, \dots, $g_{2m+1}$ be a set of coset representatives for $\Sigma_{2m}$ in $\Sigma_{2m+1}$. Then 
$U=g_1U$, $g_2U$, \dots, $g_{2m+1}U$, $U_1$ and $U_2$ form a partial orthogonal spread in $V$ consisting of $2m+3$ subspaces.

\end{theorem}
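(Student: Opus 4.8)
The plan is to handle the two assertions in turn: first that $U_1$ and $U_2$ are totally singular, then that the enlarged family meets pairwise in $0$. Throughout, write $Q$ for the non-degenerate $\Sigma_{2m+1}$-invariant quadratic form carried by $V$ (which exists since $m\geq 3$, by Lemma \ref{quadratic_type_of_spin}) and $f$ for its polarization. Because $\mathcal{A}_{2m+1}$ has index $2$ in $\Sigma_{2m+1}$ and $V$ is irreducible, Clifford's theorem gives $\dim U_1=\dim U_2=2^{m-1}$, and by Lemma \ref{not_self_dual} the modules $U_1,U_2$ are non-isomorphic, irreducible and not self-dual. Since $U_1\not\cong U_2$, the only $\mathbb{F}_2\mathcal{A}_{2m+1}$-submodules of $V=U_1\oplus U_2$ are $0$, $U_1$, $U_2$, $V$.

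For the first assertion I would argue exactly as in Lemma \ref{socle}, but reading off the dimension of $U_1^\perp$ instead of using the socle. The perpendicular space $U_1^\perp$ (with respect to $f$) is $\mathcal{A}_{2m+1}$-invariant of dimension $2^m-2^{m-1}=2^{m-1}$, so by the previous paragraph $U_1^\perp\in\{U_1,U_2\}$. If $U_1^\perp=U_2$ then $U_1\cap U_1^\perp=0$, so $f$ restricts to a non-degenerate $\mathcal{A}_{2m+1}$-invariant form on $U_1$; the map $u\mapsto f(u,-)|_{U_1}$ is then an $\mathcal{A}_{2m+1}$-isomorphism $U_1\to U_1^{*}$, contradicting the fact that $U_1$ is not self-dual. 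Hence $U_1^\perp=U_1$, that is, $U_1$ is totally isotropic for $f$. Then $Q(u+w)=Q(u)+Q(w)$ on $U_1$, so $Q|_{U_1}$ is an $\mathbb{F}_2$-linear map whose kernel is an $\mathcal{A}_{2m+1}$-invariant subspace of codimension at most one; irreducibility of $U_1$ forces $Q|_{U_1}=0$. The same argument applies to $U_2$, so both are totally singular.

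For the spread, Theorem \ref{orthogonal_spread} already gives that $g_1U,\dots,g_{2m+1}U$ meet pairwise in $0$, and $U_1\cap U_2=0$ because the sum is direct. It remains to show $U_1\cap g_iU=0$ and $U_2\cap g_iU=0$ for every $i$. The crucial module-theoretic input is that $U_1$ and $U_2$ remain irreducible on restriction to a point stabiliser $\mathcal{A}_{2m}\le\mathcal{A}_{2m+1}$, which I would deduce by descending one further step. Restricting the uniserial module $V\downarrow_{\Sigma_{2m}}$, whose two composition factors are both $W:=D^{(m+1,m-1)}$, to $\Sigma_{2m-1}$ and using $W\downarrow_{\Sigma_{2m-1}}=D^{(m,m-1)}$ (the spin module of $\Sigma_{2m-1}$, of dimension $2^{m-1}$), then further to $\mathcal{A}_{2m-1}$, shows that $V\downarrow_{\mathcal{A}_{2m-1}}$ has exactly two composition factors, each isomorphic to $X:=D^{(m,m-1)}\downarrow_{\mathcal{A}_{2m-1}}$. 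Since $m\equiv 3\bmod 4$ gives $m-1\equiv 2\bmod 4$, Benson's theorem (Lemma \ref{splitting} applied to $\Sigma_{2m-1}$, via Theorem 6.1 of \cite{Ben}) shows that this restriction does not split, so $X$ is irreducible of dimension $2^{m-1}$. Comparing with $V\downarrow_{\mathcal{A}_{2m-1}}=U_1\downarrow_{\mathcal{A}_{2m-1}}\oplus U_2\downarrow_{\mathcal{A}_{2m-1}}$ and counting dimensions, each $U_i\downarrow_{\mathcal{A}_{2m-1}}$ must be a single copy of $X$, hence irreducible; a fortiori $U_i\downarrow_{\mathcal{A}_{2m}}$ is irreducible.

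Finally I would combine invariance with this irreducibility. Taking $\mathcal{A}_{2m}$ to be the even permutations fixing the point $1$, the subspace $U_1\cap U$ is invariant under $\Sigma_{2m}\cap\mathcal{A}_{2m+1}=\mathcal{A}_{2m}$, so irreducibility of $U_1\downarrow_{\mathcal{A}_{2m}}$ forces $U_1\cap U\in\{0,U_1\}$. It cannot be $U_1$: that would give $U_1=U$ (equal dimensions), making the single subspace invariant under both $\Sigma_{2m}$ and $\mathcal{A}_{2m+1}$, hence under $\Sigma_{2m+1}$, contradicting the irreducibility of $V$. Thus $U_1\cap U=0$. Because $\mathcal{A}_{2m+1}$ is transitive on $\{g_1U,\dots,g_{2m+1}U\}$ and $U_1$ is $\mathcal{A}_{2m+1}$-invariant, choosing $h\in\mathcal{A}_{2m+1}$ with $hU=g_iU$ gives $U_1\cap g_iU=h(U_1\cap U)=0$ for every $i$, and likewise $U_2\cap g_iU=0$. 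Together with the two facts recalled at the start of this paragraph, all pairwise intersections among the $2m+3$ subspaces vanish, so they form a partial orthogonal spread. The main obstacle is precisely the irreducibility of $U_i\downarrow_{\mathcal{A}_{2m}}$; everything else is a dimension count or a rerun of the perp argument of Lemma \ref{socle}. The delicate point is that I must pass down to $\mathcal{A}_{2m-1}$ in order to bring the residue $m-1\equiv 2\bmod 4$ into Benson's (non-)splitting criterion, since the even-index analogue for $\mathcal{A}_{2m}$ was not recorded above.
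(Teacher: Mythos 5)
Your proposal is correct, and it reaches the crucial intersection step by a genuinely different route from the paper, so a comparison is worthwhile. Both proofs share the same skeleton: total singularity of $U_1,U_2$ comes from irreducibility plus failure of self-duality (your perpendicular-space argument is exactly the adaptation of Lemma \ref{socle} that the paper invokes in one line), and the heart of the matter is showing that the $\mathcal{A}_{2m}$-invariant subspace $U\cap U_i$ vanishes. The paper works inside $U$: it quotes Theorem 1.1 of \cite{Ben} to assert that the spin module $U=D^{(m+1,m-1)}$ of $\Sigma_{2m}$ remains irreducible on restriction to $\mathcal{A}_{2m}$ whenever $m$ is odd, so the proper $\mathcal{A}_{2m}$-submodule $U\cap U_i$ of $U$ must be zero; it then handles the translates by writing $(hU)\cap U_1=h(U\cap h^{-1}U_1)$ and observing $h^{-1}U_1\in\{U_1,U_2\}$. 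You work inside $U_i$ instead: you prove $U_i\downarrow_{\mathcal{A}_{2m}}$ (indeed $U_i\downarrow_{\mathcal{A}_{2m-1}}$) is irreducible by descending to $\Sigma_{2m-1}$, where the relevant spin module is $D^{(m,m-1)}$ and the residue becomes $m-1\equiv 2\bmod 4$, and then invoking the non-splitting half of Benson's criterion; your composition-factor count and the lift of irreducibility from $\mathcal{A}_{2m-1}$ back up to $\mathcal{A}_{2m}$ are both sound, and in characteristic $2$ Clifford's theorem does make ``does not split'' equivalent to ``irreducible'' for an index-$2$ subgroup. The trade-offs are these: the paper's route is a single citation, but to a statement (Theorem 1.1 of \cite{Ben}) never recorded among its lemmas, and it only needs $m$ odd; your route stays closer to the recorded Lemma \ref{splitting} but requires its \emph{converse} direction, which the lemma as stated does not provide --- you are right to flag this, and the appeal is legitimate only because Benson's Theorem 6.1 is in fact an equivalence (splitting occurs precisely when $n\equiv 0,\pm 1\bmod 8$). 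Your endgame, choosing an even coset representative $h\in\mathcal{A}_{2m+1}\cap g_i\Sigma_{2m}$ so that $U_1\cap g_iU=h(U_1\cap U)$, is an equivalent variant of the paper's computation, as is your generation argument ($\Sigma_{2m}$ together with $\mathcal{A}_{2m+1}$ generates $\Sigma_{2m+1}$) replacing the paper's use of an odd permutation of $\Sigma_{2m}$ swapping $U_1$ and $U_2$ to see $U\neq U_1$.
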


\begin{proof}
 
 We note that all $2m+3$ subspaces described above have dimension $2^{m-1}$. We also proved in Lemma \ref{not_self_dual} that
 $U_1$ and $U_2$ are not self-dual. Since they are both irreducible under the action of
 $\mathcal{A}_{2m+1}$, it follows in a straightforward way (as in the proof of Lemma
 \ref{socle}) that $U_1$ and $U_2$ are both totally singular. 
 
 We claim that $U\neq U_1$. For, $U$ is invariant under $\Sigma_{2m}$, whereas $U_1$ is invariant under $\mathcal{A}_{2m+1}$ but not under
 $\Sigma_{2m+1}$. Let $g$ be any odd permutation in $\Sigma_{2m}$. Then $g\not\in \mathcal{A}_{2m+1}$ also, and hence $gU_1=U_2$. Now if
 $U=U_1$, then $U=gU=gU_1=U_2$. This is clearly a contradiction.
 
 We deduce that $U\cap U_1$ is a proper subspace of $U$, since $\dim U=\dim U_1$. $U\cap U_1$  is also invariant under $\Sigma_{2m}\cap
 \mathcal{A}_{2m+1}=\mathcal{A}_{2m}$. We also know from the previous section of this paper that $U$ is an irreducible 
 $\mathbb{F}_2\Sigma_{2m}$-module, isomorphic to the spin module $D^{(m+1,m-1)}$. Since $m$ is odd by assumption, 
 $D^{(m+1,m-1)}\downarrow_{\mathcal{A}_{2m}}$ is irreducible, by Theorem 1.1 of \cite{Ben}. Thus the only $\mathcal{A}_{2m}$-submodules
 of $U$ are $U$ and 0. Since $U\cap U_1$ is an $\mathcal{A}_{2m}$-submodule, and not equal to $U$, it must be 0. A similar argument
 proves that $U\cap U_2=0$ also.
 
 Finally, let $h$ be any element of $\Sigma_{2m+1}$ not in $\Sigma_{2m}$, and consider $(hU)\cap U_1$. Since $h^{-1}U_1=U_1$ or $U_2$, we see that
 \[
  (hU)\cap U_1=h(U\cap h^{-1}U_1)=0,
   \]
since we have proved that $U\cap U_1=U\cap U_2=0$. Similarly, $(hU)\cap U_2=0$ also. This completes the proof.
\end{proof}

This argument shows, for example, that the partial orthogonal spread of seven subspaces in an 8-dimensional space over $\mathbb{F}_2$, 
invariant under the action of $\Sigma_7$, can be extended to a complete spread of nine subspaces, also
invariant under  $\Sigma_7$.

\section{Action of $A_9$ on a complete spread in $8$ dimensions}

\noindent We have just shown that $\Sigma_7$ acts on a complete spread of nine subspaces in an 8-dimensional space over $\mathbb{F}_2$.
We intend to give another explanation of this fact by showing that $\mathcal{A}_{9}$ acts on a
complete spread of nine subspaces in an 8-dimensional space over $\mathbb{F}_2$ and then observing
 that $\Sigma_7$ is a subgroup of $\mathcal{A}_{9}$. 
 
 We take as our starting point the data that $\mathcal{A}_{9}$ has three inequivalent irreducible representations of degree 8 over $\mathbb{F}_2$.
 We need to exclude one of these representations from consideration, and this is the so-called deleted permutation module, which arises
 from the natural permutation action of $\mathcal{A}_{9}$ on nine points. The restriction of the deleted permutation module to
 $\mathcal{A}_{8}$ has a composition series consisting of an irreducible module of dimension 6 and two copies of the trivial module,
and this is not what we want.

The modules that we require arise from the restriction of the 16-dimensional spin module $D^{(5,4)}$ of $\Sigma_9$ to $\mathcal{A}_{9}$.
We remarked already that Theorem 6.1 of \cite{Ben} implies that $D^{(5,4)}\downarrow_{\mathcal{A}_{9}}$ is the direct sum of two non-isomorphic
$\mathbb{F}_2\mathcal{A}_9$-modules. These two 8-dimensional modules are both self-dual. By way of proof, albeit not a self-contained one,
we can refer to p.85 of \cite{atlas}, where we see that all three inequivalent irreducible representations of $\mathcal{A}_9$ of degree
8 in characteristic 2 support $\mathcal{A}_9$-invariant quadratic forms.

We can now proceed to fashion this information into a statement about the action of $\mathcal{A}_9$ on  a complete orthogonal
spread. 

\begin{theorem} \label{A_9-action}

Let $V$ be an $\mathbb{F}_2\mathcal{A}_9$-module of dimension $8$ defined as an irreducible constituent of the restriction of the
spin module $D^{(5,4)}$ of $\Sigma_9$. Then $V$ is a module of quadratic type. Let $U$ be an irreducible $\mathbb{F}_2\mathcal{A}_8$-submodule
of $V$. Then $U$ is $4$-dimensional and is totally singular. Moreover, if $g_1$, \dots, $g_9$ are a set of coset representatives for
$\mathcal{A}_{8}$ in $\mathcal{A}_{9}$, the nine subspaces $g_iU$, $1\leq i\leq 9$, form a complete orthogonal spread in $V$.
 
\end{theorem}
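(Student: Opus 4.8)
The plan is to run the arguments of Lemma~\ref{socle}, Theorem~\ref{orthogonal_spread} and Theorem~\ref{extended_spread} with the pair $\mathcal{A}_9\supset\mathcal{A}_8$ in place of $\Sigma_{2m+1}\supset\Sigma_{2m}$, the decisive new ingredient being the exceptional isomorphism $\mathcal{A}_8\cong\mathrm{GL}_4(2)$. That $V$ is of quadratic type is already recorded in the discussion above (p.~85 of \cite{atlas}), so I fix a non-degenerate $\mathcal{A}_9$-invariant quadratic form $Q$ on $V$, with polarization $f$. The first task is to describe $V\downarrow_{\mathcal{A}_8}$. Since $V$ is a nontrivial irreducible module for the simple group $\mathcal{A}_9$ it is faithful, and since $\mathcal{A}_8\cong\mathrm{GL}_4(2)$ has no $8$-dimensional irreducible $\mathbb{F}_2$-module, $V\downarrow_{\mathcal{A}_8}$ must be reducible. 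I would show that each of its composition factors is one of the two $4$-dimensional irreducible $\mathbb{F}_2\mathcal{A}_8$-modules (the natural $\mathrm{GL}_4(2)$-module and its dual). Every irreducible $\mathcal{A}_8$-submodule $U$ is then $4$-dimensional and, being faithful (the two groups having equal order), realises the isomorphism $\mathcal{A}_8\cong\mathrm{GL}(U)=\mathrm{GL}_4(2)$ with $U$ as natural module; as the natural module and its dual are non-isomorphic, $U$ is not self-dual.

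Granting this, that $U$ is totally singular is exactly the non-self-dual argument of Theorem~\ref{extended_spread}. The subspace $U\cap U^\perp$ is $\mathcal{A}_8$-invariant, hence equal to $0$ or $U$ by irreducibility. If it were $0$, then $f$ would restrict non-degenerately to $U$ and identify $U$ with its own dual, contradicting the failure of self-duality; hence $U\subseteq U^\perp$. Then $Q$ is additive, and so linear, on $U$, its kernel is an $\mathcal{A}_8$-invariant subspace of codimension at most one, and irreducibility forces $Q$ to vanish identically on $U$. In particular $U$ is a maximal totally singular subspace, so $Q$ has index $4$ and is of plus type.

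For the spread I follow Theorem~\ref{orthogonal_spread}. Each $g_iU$ is a $4$-dimensional totally singular subspace because $g_i$ is an isometry. Taking $\mathcal{A}_8$ to be the stabiliser of the point $9$, for $g\notin\mathcal{A}_8$ the subspace $U\cap gU$ is invariant under $\mathcal{A}_8\cap g\mathcal{A}_8 g^{-1}$, the stabiliser of $9$ and $g(9)$, which is a copy of $\mathcal{A}_7$. The key point is that $U\downarrow_{\mathcal{A}_7}$ is irreducible: under the identification of $\mathcal{A}_8$ with $\mathrm{GL}_4(2)$, this $\mathcal{A}_7$ has order $2520$, which exceeds the orders $1344$, $576$, $1344$ of the stabilisers in $\mathrm{GL}_4(2)$ of a subspace of dimension $1$, $2$, $3$ respectively, so $\mathcal{A}_7$ lies in no such stabiliser and fixes no proper nonzero subspace of $U$. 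Since moreover $\mathcal{A}_8$ is maximal in $\mathcal{A}_9$, the equality $gU=U$ would make the proper nonzero subspace $U$ invariant under $\langle\mathcal{A}_8,g\rangle=\mathcal{A}_9$, contradicting the irreducibility of $V$; combined with the $\mathcal{A}_7$-irreducibility this gives $U\cap gU=0$, and applying the same reasoning to $g_j^{-1}g_i$ handles every pair $i\neq j$. Finally the nine subspaces $g_iU$ intersect pairwise in $0$, so together they contain $9\,(2^4-1)=(2^3+1)(2^4-1)$ nonzero singular vectors, namely all of them; equivalently they meet the bound $q^{r-1}+1=9$ on the size of a partial spread, so the spread is complete.

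The main obstacle is the very first step: showing that the two composition factors of $V\downarrow_{\mathcal{A}_8}$ are the $4$-dimensional modules rather than, say, a $6$-dimensional factor together with trivial factors. This does not follow by analogy with the earlier lemmas and instead rests on the $2$-modular representation theory of $\mathrm{GL}_4(2)\cong\mathcal{A}_8$; I would establish it by computing the Brauer character of $V$ on the $2$-regular classes (for instance from Schur's spin data, or directly) and decomposing, or by appealing to the known decomposition matrices recorded in \cite{atlas}. Once $U$ is known to be $4$-dimensional, faithfulness and the order count above make the remainder self-contained.
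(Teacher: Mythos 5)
Your proposal is correct and follows the same overall route as the paper: quadratic type from \cite{atlas}, restriction to $\mathcal{A}_8\cong\mathrm{GL}_4(2)$ producing two $4$-dimensional composition factors, non-self-duality of $U$ forcing total singularity, and then the maximality/point-stabilizer argument of Theorem \ref{orthogonal_spread} with $\mathcal{A}_7$ in place of $\Sigma_{2m-1}$. Two of your sub-arguments are in fact more self-contained than the paper's. Where the paper deduces that $U$ is not self-dual from the Atlas datum that $\phi_3\downarrow_{\mathcal{A}_8}$ is a sum of two complex-conjugate Brauer characters of degree $4$, you get it structurally: $U$ is faithful, $|\mathcal{A}_8|=|\mathrm{GL}_4(2)|$, so $U$ is the natural module of $\mathrm{GL}(U)$, which is not isomorphic to its dual. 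And where the paper simply asserts that $U\downarrow_{\mathcal{A}_7}$ is irreducible, you prove it by comparing $|\mathcal{A}_7|=2520$ with the orders $1344$, $576$, $1344$ of the stabilizers in $\mathrm{GL}_4(2)$ of subspaces of dimension $1$, $2$, $3$, so that $\mathcal{A}_7$ lies in no proper subspace stabilizer. The one step you defer---that the composition factors of $V\downarrow_{\mathcal{A}_8}$ are the two $4$-dimensional modules rather than, say, a $6$-dimensional factor plus trivial ones---is resolved in the paper exactly as you propose, by reading the restriction of $\phi_3$ off the table on p.~48 of \cite{atlas}; note that your observation that $\mathrm{GL}_4(2)$ has no $8$-dimensional irreducible $\mathbb{F}_2$-module plays the role of the paper's Galois-conjugacy argument (ruling out an $\mathbb{F}_2$-irreducible $V\downarrow_{\mathcal{A}_8}$ splitting only over $\mathbb{F}_4$), but it does not by itself pin down the factors, so the appeal to the Atlas or a Brauer character computation remains genuinely necessary, just as it is in the paper itself.
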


\begin{proof}

 We first note that we can take the Brauer character of $\mathcal{A}_{9}$ acting on $V$ to be that denoted by $\phi_3$ in the table found
 on p.85 of \cite{atlas}. (The character $\phi_4$ has the same properties as $\phi_3$, and is conjugate to $\phi_3$ under the action of 
 $\Sigma_9$.) As we noted above, $V$ is of quadratic type. (This also follows from the fact that $D^{(5,4)}$  is of quadratic type
 and $V$ is self-dual, since its Brauer character is real-valued.)
 
 Reference to the table on p.48 of \cite{atlas} shows that the restriction of $\phi_3$ to $\mathcal{A}_{8}$ consists of two different
 irreducible Brauer characters of degree 4, one being the complex conjugate of the other. Now it is a fact that $V$ is reducible as an
 $\mathbb{F}_2\mathcal{A}_8$-module. To see this, we note that $\mathcal{A}_{8}$ is isomorphic to the general linear group
 GL$_4(\mathbb{F}_2)$. The Brauer characters that occur in the restriction of $\phi_3$ to $\mathcal{A}_{8}$  are those of the natural
 4-dimensional module for GL$_4(\mathbb{F}_2)$ over $\mathbb{F}_2$ and its contragredient (or dual). If $V$ were irreducible as an
 $\mathbb{F}_2\mathcal{A}_8$-module, it would follow that the two non-isomorphic irreducible modules for GL$_4(\mathbb{F}_2)$ were Galois-conjugate
 over $\mathbb{F}_4$, which is not the case, as they are defined over $\mathbb{F}_2$.
 
 This argument establishes that $U$ is 4-dimensional and furthermore, it is not self-dual, since its Brauer character is not real-valued.
 This then implies that $U$ is totally singular. 
 
 To complete the proof, we imitate the proof of Theorem \ref{orthogonal_spread}.
 
 Let $g$ be any element of $\mathcal{A}_{9}$ not in $\mathcal{A}_{8}$.  Consider the subspace $gU$, which is also totally singular. 
  We claim that $gU\neq U$. For if $gU=U$, $U$ is invariant under the subgroup of $\mathcal{A}_{9}$ generated by $g$ and  $\mathcal{A}_{8}$. Since
 $\mathcal{A}_{8}$ is a maximal subgroup of $\mathcal{A}_{9}$, this subgroup is all of $\mathcal{A}_{9}$. But as $V$ is irreducible for 
 $\mathcal{A}_{9}$,
 $U$ cannot be invariant under $\mathcal{A}_{9}$. We deduce that $gU\neq U$, as claimed.
 
 Consider now the subspace $U\cap gU$, which we have just shown is not the whole of $U$. It is easy to see that $U\cap gU$ is invariant under
 the subgroup $\mathcal{A}_{8}\cap (g\mathcal{A}_{8}g^{-1})$ of $\mathcal{A}_{9}$. We may take $\mathcal{A}_{8}$ to be the subgroup of
 $\mathcal{A}_{9}$ fixing 1 in the natural representation of $\mathcal{A}_{9}$  on the numbers $\{1, 2, \ldots, 9\}$. Then we see that
 $\mathcal{A}_{8}\cap (g\mathcal{A}_{8}g^{-1})$ is the subgroup fixing 1 and $g(1)$, which is isomorphic to $\mathcal{A}_{7}$.
  
 We know that $U$ affords an irreducible representation of $\mathcal{A}_{8}$, and its restriction to $\mathcal{A}_{7}$ is irreducible.
 Thus the only subspaces of
 $U$ that are invariant under $\mathcal{A}_{7}$ are $U$ and 0. We deduce that $U\cap gU=0$, as required, and the rest of the theorem
 follows  from this argument.
\end{proof}

We note that any group $G$ of order 9 may be embedded into $\mathcal{A}_{9}$ by its regular representation. It follows that $G$ 
acts in a regular transitive manner on a complete orthogonal spread of size
 9 defined on a quadratic space of dimension $8$ over $\mathbb{F}_2$. This is a consequence of the theorem of Kantor and Williams already 
 described when $G$ is cyclic, but seems to be a new observation when $G$ is elementary abelian. Similarly, any 
 non-cyclic group $G$ of order 8 may be embedded into $\mathcal{A}_{8}$ by its regular representation, and then into
 $\mathcal{A}_{9}$. It follows that $G$ 
acts in a regular transitive manner on a partial orthogonal spread of size
 8 defined on a quadratic space of dimension $8$ over $\mathbb{F}_2$.
 
 There are 135 non-zero singular vectors in the 8-dimensional quadratic space of index 4  over $\mathbb{F}_2$. These vectors
 are permuted transitively by $\mathcal{A}_{9}$. The action is imprimitive, there being nine blocks of imprimitivity, namely, the non-zero vectors
 in each of the 4-dimensional subspaces that constitute the invariant complete spread. The stabilizer of a block is isomorphic to $\mathcal{A}_{8}$, 
 and it acts
 doubly transitively on the 15 non-zero vectors in the block.


\begin{thebibliography}{99}

\bibitem{Ben} D. Benson, \emph{Spin modules for symmetric groups}, J. London Math. Soc. {\bf 38} (1988), 250-262.
\smallskip

\bibitem{F} W. Feit, \emph{The Representation Theory of Finite Groups}, North-Holland, Amsterdam, 1982.

\smallskip

\bibitem{atlas} C. Jansen, K. Lux, R. Parker, R. Wilson, \emph{An Atlas of Brauer Characters}, Clarendon Press, Oxford, 1995

\smallskip
\bibitem{KW} W. M. Kantor and M. E. Williams, \emph{New flag-transitive affine planes of even order}, J. Combin. Theory Ser. A
{\bf 74} (1996), 1-13. 
\smallskip
\bibitem{GQ} R. Gow and P. Quill, \emph{The quadratic type of certain irreducible modules for the symmetric group in characteristic two},
J. Algebra {\bf 277} (2004), 533-541.

\smallskip
\bibitem{Schur} J. Schur, \emph{\"Uber die Darstellung der symmetrischen und der alternierenden Gruppe durch gebrochene  lineare
Substitutionen}, J. Reine Angew. Math. {\bf 139} (1911), 155-250.

\smallskip
\bibitem{Sheth} J. Sheth, \emph{Branching rules for two row partitions and applications to the inductive systems for symmetric groups},
Comm. Algebra {\bf 27} (1999), 3303-3316.

\smallskip
\bibitem{W} D. B. Wales, \emph{Some projective representations of $S_n$}, J. Algebra {\bf 61} (1979), 37-57.



\end{thebibliography}
\end{document}